\def\f{\rightarrow}
\def\q{\forall}
\def\non{\neg}
\def\G{\Gamma}
\def\D{\Delta}
\def\v{\vdash}
\def\ou{\vee}
\def\et{\wedge}
\def\F{\displaystyle\frac}
\def\fl{\ifmmode{-\mkern-3.5mu
\circ\mkern2mu}\else{-\kern-.28em 
\circ\hskip1pt\ }\fi\relax} 
\newtheorem{definition}{Definition}[section]
\newtheorem{theorem}{Theorem}[section]
\newtheorem{lemma}{Lemma}[section]
\newtheorem{remark}{Remark}[section]
\newtheorem{corollary}{Corollary}[section]
\begin{document} 

\begin{center} 
{\Large\bf PARAMETRIC MIXED SEQUENT CALCULUS}\\[1cm]

\begin{tabular}{cc}
{\large \bf Olivier LAURENT} & {\large \bf Karim NOUR}\\[0.2cm]
Universit\'e Denis Diderot & Universit\'e de Savoie\\
PPS - Case 7014 & LAMA - Equipe de logique \\
2, place Jussieu & Campus scientifique\\
75251 Paris Cedex 05 - France & 73376 Le Bourget du Lac - France  \\
\end{tabular}
\end{center}

\begin{abstract}
In this paper, we present a propositional sequent calculus containing
disjoint copies of classical and intuitionistic logics. We prove a
cut-elimination theorem and we establish a relation between this
system and linear logic.
\end{abstract}

\section{Introduction}

The systems which represent classical and intuitionistic logics have
often been studied separately. For each of these logics we establish
its proper properties. Although the systems which represent
intuitionistic logic are sub-systems of those which represent
classical logic, we usually do not find in the literature studies of the
properties of these systems at the same time. We mainly find many
external translations from classical to intuitionistic logic. The main
novelty of the system which we look for is that intuitionistic and
classical logics appear as fragments. For instance a proof of an
intuitionistic formula may use classical lemmas without any
restriction. This approach is radically different from the one that
consists in changing the rule of the game when we want to change the
logic. We want only one logic which, depending on its use, may appear
classical or intuitionistic.\\

J.-Y. Girard presented in \cite{LU} a single sequent calculus (denoted
LU) common to classical, intuitionistic and linear logics.  Each
formula is given with a polarity: positive, neutral and negative. For
each connective the rules depend on the polarity of the formulas. On
the other hand the system LU has a cut-elimination theorem and then
the sub-formula property. Although the system LU is an answer to our
question, we seek a simpler and more intuitive system only for
classical and intuitionistic logics.\\

The usefulness of finding a system which mixes classical and
intuitionistic logics is more and more recognized. For example,
J.-L. Krivine and the second author introduced a second order mixed
logic in order to type storage and control operators in $\lambda {\cal
C}$-calculus (see \cite{KN}). Indeed, they needed intuitionistic logic
to characterize the operational behavior of the storage operators and
classical logic for the control operators. The theoretical properties
of this system are not difficult to prove because the only connectives
are $\f$ and $\q$. Recently, C. Raffalli introduced in \cite{Ra} a
second order mixed logic which is slight extension of that of \cite{KN}
in order to extract a program for some classical proofs. He applied
his method to extract a program from a classical proof of Dickson's
lemma.\\

In the paper \cite{NN}, the second author and A. Nour presented a
propositional logic with all connectives (denoted PML) containing
three kinds of variables: minimal, intuitionistic and classical. The
absurdity rules are restricted to the formulas containing the
corresponding variables. They introduced for the system PML a Kripke
semantics and they showed a completeness theorem. They deduced from this
theorem a very significant result which is the following: ``for a
formula to be derivable in a logic, it is necessary that the formula
contains at least a variable which corresponds to this logic''. They
also presented a sequent calculus version of this system. The systems
presented in \cite{NN} are not satisfactory because they do not have
cut-elimination results. \\

We propose in this paper another approach to solve this problem. We
fix a set of formulas ${\cal P}$ which represents intuitively the set
of formulas on which we can do classical reasoning.  A sequent is a
pair of multisets of formulas, denoted $\G \v \D;\Pi$, where $\D
\subseteq {\cal P}$ and $\Pi$ contains at most one formula.  We
introduce a list of classical rules on the multiset $\D$ and
intuitionistic rules on $\Pi$. Certain rules will require conditions
on the membership of some formulas to the set ${\cal P}$. We 
prove a cut-elimination
theorem and thus deduce the sub-formula property.
We show how to code systems LK and LJ in our system which is
coded in system LL.\\

The paper is organized as follows. In section \ref{Def} we present the
rules of our system.  We prove in section \ref{Prop} the cut
elimination properties of the system. The codings of systems LK and LJ
in our system are given in section \ref{Codings}. We present in section
\ref{LL} a coding of the system in system LL.

\section{The ML$_{\cal P}$ sequent calculus}\label{Def}

\begin{definition} $\;$
\begin{enumerate}
\item The set of formulas is defined by the following grammar:
$$
F ::= \; 0  \; \mid  \; \perp  \; \mid  \; X  \; \mid  \; F \et F  \;
\mid  \; F \ou F  \; \mid  \; F \f F
$$
where $X$ ranges over a set of propositional variables ${\cal V}$.
\item Let ${\cal P}$ be a subset of formulas. A \emph{${\cal P}$-sequent} is a pair of multisets
of formulas, denoted $\G \v \D;\Pi$, where $\D \subseteq {\cal P}$ and
$\Pi$ contains at most one formula. The set $\D$ is called the \emph{body}
and $\Pi$ (the space after ``;'') is called the \emph{stoup}. A \emph{${\cal
P}$-derivation} may be constructed according to one of the rules below.

\begin{center}
{\bf \large AXIOM/CUTS}
\end{center}

\begin{center}
$\F{}{A \v ;A} \; ax$
\end{center}

\begin{minipage}[t]{200pt}
$\F{\G \v \D;A \;\;\; \G',A \v \D';\Pi} {\G, \G' \v \D,\D';\Pi} \; cut_1$
\end{minipage}
\begin{minipage}[t]{200pt}
$\F{\G \v \D,A;\Pi \;\;\; \G',A \v \D';} {\G, \G' \v \D,\D';\Pi} \; cut_2$
\end{minipage}

\begin{center}
{\bf \large STRUCTURE}
\end{center}

\begin{minipage}[t]{200pt}
$\;$ 
\end{minipage}
\begin{minipage}[t]{200pt}
$\F{\G \v \D;A  \; {\;}_{A \in {\cal P}}}{\G \v \D,A;} \; der$ 
\end{minipage}

\begin{minipage}[t]{200pt}
$\F{\G,A,A \v \D;\Pi}{\G,A \v \D;\Pi} \; c_l$ 
\end{minipage}
\begin{minipage}[t]{200pt}
$\F{\G \v \D,A,A;\Pi}{\G \v \D,A;\Pi} \; c_r$ 
\end{minipage}

\begin{minipage}[t]{200pt}
$\F{\G\v \D;\Pi}{\G,A \v \D;\Pi} \; w_l$
\end{minipage}
\begin{minipage}[t]{200pt}
$\F{\G \v \D;\Pi \;  {\;}_{A\in{\cal P}}}{\G \v \D,A;\Pi} \; w_r$
\end{minipage}

\begin{center}
{\bf \large LOGIC}
\end{center}

\begin{minipage}[t]{200pt}
$\F{{\;}_{\D \subseteq {\cal P}}}{\G,0 \v \D;\Pi} \; 0$
\end{minipage}
\begin{minipage}[t]{200pt}
$\;$\\[0.2cm]
\end{minipage}
\begin{minipage}[t]{200pt}
$\F{}{\perp \v ;} \; \perp$
\end{minipage}
\begin{minipage}[t]{200pt}
$\;$\\
\end{minipage}

\begin{minipage}[t]{200pt}
$\F{\G,A,B \v \D;C \; {\;}_{A \not \in {\cal
    P} \; {\rm and} \; B \not \in {\cal P}}} {\G, A \et B \v \D;C} \; \et^1_l$
\end{minipage}
\begin{minipage}[t]{200pt}
$\F{\G \v \D;A \;\;\; \G'\v\D';B} {\G,\G' \v \D,\D'; A \et B} \; \et^1_r$
\end{minipage}

\begin{minipage}[t]{200pt}
$\F{\G,A,B \v \D;} {\G, A \et B \v \D;} \; \et^2_l$
\end{minipage}
\begin{minipage}[t]{200pt}
$\F{\G \v \D,A; \;\;\; \G'\v\D',B;} {\G,\G' \v \D,\D'; A \et B} \; \et^2_r$
\end{minipage}

\begin{minipage}[t]{200pt}
$\;$
\end{minipage}
\begin{minipage}[t]{200pt}
$\F{\G \v \D; A\;\;\; \G'\v\D',B;} {\G,\G' \v \D,\D'; A \et B} \; \et^3_r$ 
\end{minipage}

\begin{minipage}[t]{200pt}
$\;$
\end{minipage}
\begin{minipage}[t]{200pt}
$\F{\G \v \D,A; \;\;\; \G'\v\D';B} {\G,\G' \v \D,\D'; A \et B} \; \et^4_r$ \\
\end{minipage}

\begin{minipage}[t]{200pt}
$\F{\G , A \v \D ;C  \;\;\;  \G , B \v \D ; C \; {\;}_{A \not \in {\cal
    P} \; {\rm and} \; B \not \in {\cal P}}} 
{\G , A \ou B \v \D ; C} \; \ou^1_l$
\end{minipage}
\begin{minipage}[t]{200pt}
$\F{\G \v \D;A} { \G \v \D ; A \ou B} \; \ou^1_r$ 
\end{minipage}

\begin{minipage}[t]{200pt}
$\;\;\;\;\;\;\;\;\;\;\;\;\;\;\;\;\;\;\;\;\;\;\;\;\;$
\end{minipage}
\begin{minipage}[t]{200pt}
$\F{\G \v \D;B} { \G \v \D ; A \ou B} \; \ou^2_r$ 
\end{minipage}

\begin{minipage}[t]{200pt}
$\F{\G , A \v \D ;  \;\;\;  \G , B \v \D ;} 
{ \G , A \ou B \v \D ;} \; \ou^2_l$
\end{minipage}
\begin{minipage}[t]{200pt}
$\F{\G \v \D,A;} { \G \v \D ; A \ou B} \; \ou^3_r$ 
\end{minipage}

\begin{minipage}[t]{200pt}
$\;$
\end{minipage}
\begin{minipage}[t]{200pt}
$\F{\G \v \D,B;} { \G \v \D ; A \ou B} \; \ou^4_r$ \\
\end{minipage}

\begin{minipage}[t]{200pt}
$\F{\G,B \v \D;C  \;\;\; \G'\v\D';A  \; {\;}_{B \not \in {\cal P}}}{\G,\G', A \f B \v
    \D,\D';C} \; \f^1_l$
\end{minipage}
\begin{minipage}[t]{200pt}
$\F{\G , A \v  \D;B} {\G \v \D ; A \f B } \; \f^1_r$
\end{minipage}

\begin{minipage}[t]{200pt}
$\F{\G,B \v \D;  \;\;\; \G'\v\D';A}{\G,\G', A \f B \v \D,\D';} \; \f^2_l$
\end{minipage}
\begin{minipage}[t]{200pt}
$\F{\G , A \v  \D,B;} {\G \v \D ; A \f B} \; \f^2_r$
\end{minipage}

\begin{minipage}[t]{200pt}
$\F{\G,B \v \D;  \;\;\; \G'\v\D',A;\Pi}{\G,\G', A \f B \v \D,\D';\Pi} \; \f^3_l$
\end{minipage}
\begin{minipage}[t]{200pt}
$\;\;\;\;\;\;\;\;\;\;\;\;\;\;\;\;\;\;\;\;\;\;\;\;\;$
\end{minipage}
\end{enumerate}
\end{definition}

We write $\G  \v_{\cal P} \D; \Pi$ if the ${\cal P}$-sequent $\G  \v \D;
\Pi$ is derivable in system ML$_{\cal P}$.

\begin{remark}
\begin{enumerate}
\item The conditions which we add on the set ${\cal P}$ in some left
logical rules are necessary to obtain a cut-elimination
theorem. Indeed without these conditions the cuts on the principal formulas
of the rules ($\et^i_r$ $2 \leq i \leq 4$ and $\et^1_l$) or
($\ou^i_r$ $3 \leq i \leq 4$ and $\ou^1_l$) or ($\f^2_r$ and
$\f^1_l$) cannot be eliminate.
\item We can remove the rules $\et^3_r$ and $\et^4_r$ and replace
the rule $\et^1_l$ by the following rule:
\begin{center}
$\F{\G,A,B \v \D;C \; {\;}_{A \not \in {\cal
    P} \; {\rm or} \; B \not \in {\cal P}}} {\G, A \et B \v \D;C} \;
\et^1_l$
\end{center}
In this new system, the results of sections \ref{Prop} and \ref{Codings} remain
true but not those of section \ref{LL}.
\item We chose an additive ``or'' to facilitate the embedding of our
system in LL. The results of sections \ref{Prop} and \ref{Codings}
remain true if we add the two following left rules:\\

\begin{minipage}[t]{190pt}
$\F{\G , A \v \D ;C  \;\;\;  \G , B \v \D ; \; {\;}_{A \not \in {\cal
    P}}} 
{\G , A \ou B \v \D ; C} \; \ou^3_l$
\end{minipage}
\begin{minipage}[t]{190pt}
$\F{\G , A \v \D ;  \;\;\;  \G , B \v \D ; C \; {\;}_{B \not \in {\cal
    P}}} 
{\G , A \ou B \v \D ; C} \; \ou^4_l$
\end{minipage}

\item If we consider the connector $\non$ as primitive, 
we can add the following rules:\\

\begin{minipage}[t]{200pt}
$\F{\G\v\D;A}{\G, \neg A \v \D;} \; \neg^1_l$
\end{minipage}
\begin{minipage}[t]{200pt}
$\F{\G , A \v  \D;} {\G \v \D ; \neg A} \; \neg_r$\\[0.2cm]
\end{minipage}
\begin{minipage}[t]{200pt}
$\F{\G\v\D,A;\Pi}{\G,\neg A \v \D;\Pi} \; \neg^2_l$
\end{minipage}
\begin{minipage}[t]{200pt}
$\;\;\;\;\;\;\;\;\;\;\;\;\;\;\;\;\;\;\;\;\;\;\;\;\;$\\
\end{minipage}

If $\bot \in {\cal P}$, we derive $\non A \v_{\cal P} ; A \f
\bot$ and $A \f \bot \v_{\cal P} ; \non A$.
\end{enumerate}
\end{remark}

\section{Properties of system ML$_{\cal P}$} \label{Prop}

\begin{theorem} \label{cut}
The {\it Hauptsatz} holds for {\rm ML}$_{\cal P}$.
\end{theorem}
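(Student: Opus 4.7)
The plan is to follow Gentzen's classical \emph{Hauptsatz} strategy, adapted to the present calculus with its two cut rules and its body/stoup distinction. I would attach to each cut two measures: the \emph{rank} (the size of the cut formula) and the \emph{height} (the sum of the heights of the derivations of its premises). Because the system contains the contraction rules $c_l$ and $c_r$, I would strengthen the statement to handle a ``multi-cut'' (or mix) that simultaneously removes several copies of the cut formula; this standard device prevents contraction from blocking the height-induction.

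The induction is then double: outer on rank, inner on height. When the cut formula is not principal in one of the two premises, the reduction commutes the cut past the last rule of that premise. Crucially, the side conditions $A \not\in {\cal P}$, $B \not\in {\cal P}$, etc., depend only on the principal formula of the rule being commuted past and not on the cut formula, so they survive the permutation. When the cut formula is principal in both premises, the reduction produces cuts on its immediate subformulas, of strictly smaller rank. For the pairs $\et^1_r / \et^1_l$, $\ou^{1,2}_r / \ou^1_l$ and $\f^1_r / \f^1_l$ this gives the usual Gentzen decomposition, and the conditions that certain subformulas do not lie in ${\cal P}$ are exactly what makes the resulting subcuts well-typed instances of $cut_1$. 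For the pairs involving $\et^{2,3,4}_r$, $\ou^{3,4}_r$, $\f^2_r$ against the corresponding $\et^2_l$, $\ou^2_l$, $\f^{2,3}_l$, the bodies of the premises already contain the relevant subformulas and the reduction produces $cut_2$ instances, possibly after an intermediate $der$ step.

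The structural rules are handled as usual: $w_l$ or $w_r$ applied to the cut formula simply erases the cut, and contraction is absorbed by the multi-cut formulation. The rule $der$ is specific to this system: when the cut formula enters the body via $der$ on the left premise of a $cut_2$, one replaces the $cut_2$ by a $cut_1$ directly on the premise of the $der$, strictly decreasing the height at unchanged rank.

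The main obstacle I anticipate is the principal case for $\et$ when the right introduction is $\et^{2}_r$, $\et^{3}_r$ or $\et^{4}_r$: the cut formula $A \et B$ sits in the stoup of both premises, but its subformulas may lie in the body of the left premise in various combinations. One must carefully pick which subcut is of type $cut_1$ and which of type $cut_2$, insert $der$ steps where needed, and verify that each resulting body remains a subset of ${\cal P}$ and that each stoup contains at most one formula. A secondary technical point is to check that the switches between $cut_1$ and $cut_2$ induced by $der$ and by the various $r$-variants always yield a strictly smaller induction measure, so that the double induction terminates.
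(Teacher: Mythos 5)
Your strategy is sound and would yield the theorem, but it is not the route the paper takes, and the difference is mainly in the bookkeeping that tames contraction. You use the textbook Gentzen measure (rank of the cut formula, sum of heights) together with a multicut/mix rule to absorb $c_l$ and $c_r$. The paper instead assigns to each cut a degree $(l,k)$ where $l$ is the length of the cut formula and $k\in\{0,1,2,3\}$ records how far the cut is from being a principal--principal $cut_1$ ($k=3$ for $cut_2$, down to $k=0$ when both occurrences are principal); the reduction for $k>0$ is a \emph{global} transformation that carries one entire premise up through the other derivation to every point where the cut formula is principal (or, for $cut_2$, to every point where it entered the body via $der$, $w_r$ or $0$). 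This premise-raising handles contraction, weakening and all commutations in one step, so no height measure and no mix rule are needed; the induction is on the lexicographically ordered list of cut degrees. Your approach buys familiarity, but you would have to formulate a mix rule compatible with the body/stoup discipline (multiple copies can occur in $\G$ and in $\D$ but the stoup holds at most one formula, and a single mix may have to mediate between $cut_1$- and $cut_2$-style occurrences of the same formula), which is exactly the delicacy you flag at the end; the paper's index $k$ sidesteps this by making the $cut_2\to cut_1$ conversion and the two premise-raising passes explicit, separate stages of the descent. Your identification of the genuinely critical points --- that the $\not\in{\cal P}$ side conditions are precisely what make the principal sub-cuts well-formed, and that $der$ is the hinge between the two cut rules --- matches what the paper's case $k=3$ and case $k=0$ rely on.
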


\begin{proof} 
The degree of a cut-rule in a ${\cal P}$-derivation is the pair of
integers $(l,k)$ where $l$ is the length of the cut-formula and $k$ is
defined by:

\begin{itemize}
\item $k= 3$, if it is the rule $cut_2$.
\item $k= 2$, if it is the rule $cut_1$ and the cut-formula of the
stoup of the left premise is the principal formula of a logical rule.
\item $k= 1$, if it is the rule $cut_1$ and the cut-formula of the
stoup of the left premise is the principal formula of a logical rule
but not the cut-formula of the right premise.
\item $k=0$, if it is the rule $cut_1$ and the cut-formulas are the
principal formulas of logical rules.
\end{itemize}

The order we consider on degrees is the lexicographic order.

The degree of a ${\cal P}$-derivation is the finite list of increasing
degrees of its cuts. We consider also the lexicographic order on these
degrees.

Let ${\cal D}$ be a ${\cal P}$-derivation. We will explain how to
reduce a cut in ${\cal D}$ to obtain a ${\cal P}$-derivation of
smaller degree.

We consider a cut of degree $(l,k)$ where its premises are derivable
without the cut-rules.

\begin{itemize}

\item If $k= 0$, we replace this cut by other cuts of degrees $(l',k')$
where $l'<l$.

\item If $k= 1$, we move up the left premise in the ${\cal P}$-derivation of
the right premise at the places where the cut-formula was the
principal formula of a logical rule. We thus replace this cut by other
cuts of degrees $(l,0)$.

\item If $k= 2$, we move up the right premise in the ${\cal P}$-derivation of
the left premise at the places where the cut-formula was the principal
formula of a logical rule. We thus replace this cut by other cuts of degrees
$(l,1)$ or $(l,0)$.

\item If $k = 3$, we move up the right premise in the ${\cal P}$-derivation of
the left premise at the places where the cut-formula was introduced
using the rules $der$, $w_r$ or $0$. We thus replace this rule $cut_2$ by other
rules $cut_1$ of degrees $(l,k')$ where $0 \leq k' \leq 2$.

\end{itemize}

We notice that in each case the degree of the obtained ${\cal
P}$-derivation decreases strictly.

\end{proof}

\begin{corollary}
The {\rm ML}$_{\cal P}$ has the sub-formula property.
\end{corollary}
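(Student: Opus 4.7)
The plan is to derive the sub-formula property directly from Theorem~\ref{cut} by a routine induction over cut-free ${\cal P}$-derivations. Given a ${\cal P}$-derivation ${\cal D}$ of $\G \v \D;\Pi$, apply Theorem~\ref{cut} to obtain a cut-free ${\cal P}$-derivation ${\cal D}'$ of the same sequent. It then suffices to prove that in every cut-free ${\cal P}$-derivation, every formula occurring in any sequent is a sub-formula of a formula appearing in the end-sequent.

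I would prove this by induction on the structure of ${\cal D}'$. The base cases are the axiom $ax$, the rule $\perp$, and the rule $0$; in each of these the sequent contains only formulas that are (improper) sub-formulas of the end-sequent, so the property holds trivially. For the inductive step, I would inspect each remaining rule of the system and check that every formula appearing in a premise is either a formula already present in the conclusion (hence a sub-formula of the end-sequent by the induction hypothesis applied to the conclusion, which is a sub-sequent of the overall end-sequent) or an immediate sub-formula of the principal formula of the rule. The structural rules $c_l$, $c_r$, $w_l$, $w_r$, $der$ only duplicate, erase, or move formulas between body and stoup, so no new formulas are introduced upward. The logical rules ($\et^i_l$, $\et^i_r$, $\ou^i_l$, $\ou^i_r$, $\f^i_l$, $\f^i_r$, and optionally $\non^i_l$, $\non_r$) each introduce a principal formula in the conclusion whose immediate sub-formulas are exactly the distinguished formulas appearing in the premises; the side-conditions on ${\cal P}$-membership do not alter this analysis.

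The only rules that would have broken the argument are $cut_1$ and $cut_2$, since their cut-formula $A$ appears in the premises but not in the conclusion and need not be a sub-formula of anything in the end-sequent. These are precisely the rules eliminated by Theorem~\ref{cut}, so the induction goes through without obstruction. I do not expect any genuine difficulty: the whole content of the corollary is the observation that, rule by rule, premise formulas are sub-formulas of conclusion formulas, and this is visible by direct inspection of the schemas in the definition of ML$_{\cal P}$.
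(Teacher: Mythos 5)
Your proposal is correct and is exactly the standard argument the paper compresses into its one-line proof ``By theorem \ref{cut}'': obtain a cut-free derivation and observe by inspection that every non-cut rule only has sub-formulas of its conclusion in its premises. No difference in approach, only in the level of detail.
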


\begin{proof}
By theorem \ref{cut}.
\end{proof}

\begin{corollary} \label{const}
\begin{enumerate}
\item If $\v_{\cal P} ; A \ou B$, then  $\v_{\cal P} ; A$ or $\v_{\cal
P} A ;$ or $\v_{\cal P} ; B$ or $\v_{\cal P} B ;$.
\item If $A,B \not \in {\cal P}$ and $\v_{\cal P} ; A \ou B$, then  $\v_{\cal P} ; A$ or $\v_{\cal P} ; B$.
\end{enumerate}
\end{corollary}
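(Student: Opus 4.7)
The plan is to apply Theorem~\ref{cut} to obtain a cut-free ${\cal P}$-derivation $\mathcal{D}$ of $\v_{\cal P} ; A \ou B$, and then argue by case analysis on its last rule. Since the end-sequent has empty left context, empty body, and the single formula $A \ou B$ in the stoup, most inference rules are immediately excluded on syntactic grounds.

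More precisely, I will rule out rules as follows. Cuts are absent by construction. The axiom rule forces a nonempty left-hand side. Each left rule ($\et^i_l$, $\ou^i_l$, $\f^i_l$, $c_l$, $w_l$, $\perp$, and the $0$-rule) introduces a formula on the left, which is incompatible with $\G = \emptyset$. The rules $c_r$, $w_r$, and $der$ each require at least one formula in the body of their conclusion, which contradicts $\D = \emptyset$. Finally, the other right rules ($\et^i_r$ and $\f^i_r$) place a different principal connective in the stoup. Hence the only candidates for the last rule of $\mathcal{D}$ are $\ou^1_r$, $\ou^2_r$, $\ou^3_r$, and $\ou^4_r$, whose respective premises are $\v_{\cal P} ; A$, $\v_{\cal P} ; B$, $\v_{\cal P} A ;$, and $\v_{\cal P} B ;$. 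This establishes item~1 directly.

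For item~2, I will use the fact that, by the very definition of a ${\cal P}$-sequent, the body of any derivable sequent must be a subset of ${\cal P}$. Therefore the premise $\v A ;$ of $\ou^3_r$ is a legitimate ${\cal P}$-sequent only when $A \in {\cal P}$, and similarly $\ou^4_r$ requires $B \in {\cal P}$. Under the additional hypothesis $A, B \notin {\cal P}$, both of these cases are excluded, so the last rule must be $\ou^1_r$ or $\ou^2_r$, yielding $\v_{\cal P} ; A$ or $\v_{\cal P} ; B$ respectively.

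I do not expect a substantive obstacle here: once cut-elimination is available, the proof is a routine exhaustive check of which inference rules can conclude with a sequent of the form $\v ; A \ou B$.
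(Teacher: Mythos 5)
Your proof is correct and follows exactly the paper's argument: the paper's own proof is the one-line ``consider a normal (cut-free) derivation of $\v_{\cal P} ; A \ou B$ and look at the last rule,'' and your case analysis simply makes explicit the exhaustive check that this one-liner leaves implicit. In particular, your observation that the premises $\v A;$ and $\v B;$ of $\ou^3_r$ and $\ou^4_r$ are well-formed ${\cal P}$-sequents only when $A \in {\cal P}$ (resp.\ $B \in {\cal P}$) is precisely the point needed for item~2.
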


\begin{proof}
We consider a normal derivation of $\v_{\cal P} ; A \ou B$ and we look
at the last used rule.
\end{proof}

\section{Codings of LK and LJ in  ML$_{\cal P}$} \label{Codings}

We consider systems LK and LJ constructed respectively over the sets
of variables ${\cal V} \cup \{\perp\}$ and ${\cal V} \cup \{0\}$. We
suppose that $\bot$ (resp. $0$) is the symbol for the absurdity of LK
(resp. LJ). We will give some conditions to code separately systems LK
and LJ in ML$_{\cal P}$.

\begin{definition}
A set of formulas ${\cal S}$ is said to be \emph{stable} iff

for every $c \in \{\et,\ou,\f\}$, if $A c B \in {\cal S}$, then $A,B
\in {\cal S}$
\end{definition}

\begin{theorem} \label{LK}
Let ${\cal K}$ be a stable set such that ${\cal K} \subseteq {\cal P}$
and $0 \not \in {\cal K}$.

If $\G,\D \subseteq {\cal K}$, then $\G \v_{\cal P} \D;$ iff $\G
\v_{\rm LK} \D$.
\end{theorem}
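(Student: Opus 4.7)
The plan is to prove the two directions of the equivalence separately.

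For the easy direction (from $\G \v_{\rm LK} \D$ to $\G \v_{\cal P} \D;$), I would induct on the LK-derivation. Since every formula that appears lies in ${\cal K} \subseteq {\cal P}$, each LK rule can be simulated by its ``type-2'' counterpart in ML$_{\cal P}$, whose conclusions have empty stoup: namely $\et^2_l$, $\et^2_r$, $\ou^2_l$, $\ou^3_r$ (or $\ou^4_r$), $\f^2_l$, $\f^2_r$, together with the structural rules and $cut_2$. The LK axiom $A \v A$ is simulated by the ML$_{\cal P}$ axiom $A \v ; A$ followed by $der$, which applies because $A \in {\cal K} \subseteq {\cal P}$. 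The LK absurdity rule for $\bot$ is given directly by the $\perp$-rule of ML$_{\cal P}$.

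For the converse (from $\G \v_{\cal P} \D;$ to $\G \v_{\rm LK} \D$), I would first invoke Theorem \ref{cut} to replace the given ML$_{\cal P}$-derivation by a cut-free one, and then exploit the sub-formula property. Stability of ${\cal K}$ ensures that every formula appearing in the normalized derivation is a sub-formula of some formula in $\G \cup \D$, hence lies in ${\cal K}$ and therefore in ${\cal P}$. This has two crucial consequences: the $0$-rule is never used (since $0 \not\in {\cal K}$), and the left-logical rules with side-condition ``$A \not\in {\cal P}$'' (that is, $\et^1_l$, $\ou^1_l$, $\f^1_l$) cannot apply. I would then associate to each intermediate ${\cal P}$-sequent $\G' \v \D'; \Pi$ the LK-sequent $\G' \v_{\rm LK} \D', \Pi$ (collapsing the stoup into the body) and show by induction on the normal derivation that every remaining ML$_{\cal P}$ rule translates to a valid LK-inference; under this collapse $der$ becomes an identity step and the various ``superscripted'' forms of $\et_r$, $\ou_r$, $\f_l$, $\f_r$ all reduce to the standard LK rules.

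The main obstacle is the second direction's case analysis: one has to check each variant right-rule of $\et$ and $\ou$, as well as $\f^3_l$, against the proposed collapse of stoup into body, and also verify that the stoup-producing $ax$ and $\ou^1_r$, $\ou^2_r$, $\et^1_r$, $\f^1_r$ rules (and the eventual elimination of the stoup via $der$) compose into proper LK-derivations under the translation. Once this verification is carried out the rest of the argument is routine; the essential inputs are cut-elimination, the sub-formula property, the stability of ${\cal K}$, and the hypothesis $0 \not\in {\cal K}$.
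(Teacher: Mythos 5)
Your proof is essentially the paper's: the same two simulations (merge the stoup into the body in one direction; use the stoup-free ``type-2'' rules followed by $der$ in the other), and it is correct up to one mislabelled rule. Two remarks. First, to simulate LK's left implication rule under your invariant (every formula in the body, empty stoup) you need $\f^3_l$ with $\Pi$ empty, not $\f^2_l$: the rule $\f^2_l$ requires its minor premise to carry $A$ in the stoup, which you no longer have once $A$ has been discharged into the body via $der$, and there is no rule sending a body formula back to the stoup. The paper's list for the left rules is accordingly $cut_2$, $c_l$, $w_l$, $\perp$, $\et^2_l$, $\ou^2_l$ and $\f^3_l$. Second, for the direction from $\G \v_{\cal P} \D;$ to $\G \v_{\rm LK} \D$ the paper does not normalize at all: it simply observes that after replacing ``;'' by ``,'' every rule of ML$_{\cal P}$ --- including $cut_1$, $cut_2$ and the restricted rules $\et^1_l$, $\ou^1_l$, $\f^1_l$ --- is already (an instance of) an LK rule, so no appeal to Theorem \ref{cut} is made there. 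Your detour through cut-elimination and the sub-formula property is nonetheless well spent: it is exactly what rules out the $0$-rule, and hence any occurrence of $0$ (which is not even an LK formula over ${\cal V}\cup\{\perp\}$), in the derivation being translated --- a point on which the paper's one-line argument is silent and which is where the hypotheses $0\notin{\cal K}$ and the stability of ${\cal K}$ actually do their work.
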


\begin{proof}
$\Rightarrow$ : If we replace ``;'' by ``,'', the rules of ML$_{{\cal
P}}$ are rules of LK.

$\Leftarrow$ : We consider a derivation ${\cal D}$ of $\G \v_{\rm LK}
\D$. We check that we can move up the rules used in ${\cal D}$ without
putting formulas in the stoups. The rule $cut$ and the left rules of
LK correspond to the rules $cut_2$, $c_l$, $w_l$, $\perp$,
$\et^2_l$,$\ou^2_l$ and $\f^3_l$ (without stoups). The axiom and the
right rules of LK correspond to $ax$, $c_r$, $w_r$, $\et^2_r$,
$\ou^3_r$, $\ou^4_r$ and $\f^2_r$ using the rule $der$.
\end{proof}

\begin{remark}
\begin{enumerate}
\item The set ${\cal K}={\cal P}$ of all formulas on ${\cal V} \cup \{\perp\}$ 
satisfies the hypothesis of theorem \ref{LK}.
\item For every formula $A$ on ${\cal V} \cup \{\perp\}$, let ${\cal K}_A$ be
the finite set of the sub-formulas of $A$. If ${\cal K}_A \subseteq
{\cal P}$, then $\v_{{\cal P}} A;$ iff $\v_{\rm LK} A$.
\end{enumerate}
\end{remark}

\begin{theorem} \label{LJ}
Let ${\cal I}$ be a stable set such that ${\cal I} \cap {\cal P} =
\emptyset$ and $\perp \not \in {\cal I}$.

If $\G,A \subseteq {\cal I}$, then $\G \v_{\cal P} ; A$ iff
$\G \v_{\rm LJ} A$.
\end{theorem}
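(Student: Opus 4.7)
The plan is to handle the two directions separately, exploiting cut-elimination and the sub-formula property for the non-trivial direction.

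For $\Leftarrow$, I would induct on the LJ derivation, translating each LJ rule to an ML$_{\cal P}$ rule while keeping all bodies empty and letting the stoup coincide with the LJ right-hand-side. The structural rules $c_l, w_l$ and the rules $ax$, $0$, $cut_1$ are essentially identical in both systems; the conjunction rules translate to $\et^1_l$ and $\et^1_r$, where the side condition $A,B \not\in {\cal P}$ of $\et^1_l$ is automatic because $A,B$ are sub-formulas of a formula in ${\cal I}$ (use stability) and ${\cal I}\cap{\cal P}=\emptyset$; the disjunction rules translate to $\ou^{1,2}_r$ and to $\ou^1_l$ or $\ou^2_l$ depending on whether the LJ right-hand side is non-empty or empty; implication right goes to $\f^1_r$, and implication left to $\f^1_l$ or $\f^2_l$ according to the same dichotomy.

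For $\Rightarrow$, the disjointness of ${\cal I}$ and ${\cal P}$ collapses the classical machinery once the derivation has been normalised. First apply Theorem \ref{cut} to obtain a cut-free ML$_{\cal P}$-derivation of $\G \v ;A$. By the sub-formula property and the stability of ${\cal I}$, every formula occurring anywhere in the derivation lies in ${\cal I}$; consequently, since ${\cal I}\cap{\cal P}=\emptyset$, \emph{no} formula in the derivation lies in ${\cal P}$. Because every body entry is required to be in ${\cal P}$, the body must be empty at every sequent in the derivation. This single observation forbids every rule that requires or introduces a body formula, namely $der$, $w_r$, $c_r$, $\et^2_r,\et^3_r,\et^4_r$, $\ou^3_r,\ou^4_r$, $\f^2_r$, and $\f^3_l$; the $\perp$ rule is likewise excluded since $\perp \not\in {\cal I}$. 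The surviving rules --- $ax$, $c_l$, $w_l$, $0$, $\et^1_l$, $\et^2_l$, $\et^1_r$, $\ou^{1,2}_l$, $\ou^{1,2}_r$, $\f^{1,2}_l$, $\f^1_r$ --- are in one-to-one correspondence with the rules of LJ when one reads the stoup as the LJ right-hand-side (the empty stoup corresponding to the empty LJ RHS). Translating rule by rule yields an LJ derivation of $\G \v_{\rm LJ} A$.

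The main obstacle is the exhaustive case analysis in the $\Rightarrow$ direction: one must verify that \emph{every} ML$_{\cal P}$ rule requiring a non-empty body is indeed forbidden, and that the surviving rules are \emph{exactly} enough to simulate all of LJ. The presence of $\f^2_l$ in the surviving list is particularly important, since it is what translates the instance of LJ's $\f_l$ whose right-hand side is empty; without it the translation would fail on such derivations.
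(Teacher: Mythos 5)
Your proof is correct and follows essentially the same route as the paper's: for $\Rightarrow$, normalize and use the sub-formula property together with ${\cal I}\cap{\cal P}=\emptyset$ to force all bodies to be empty, so only the intuitionistic rules survive; for $\Leftarrow$, simulate LJ rule by rule with empty bodies (your use of $\et^2_l$, $\ou^2_l$, $\f^2_l$ for empty succedents is a harmless refinement of the paper's list). The one point worth making explicit is that in the $\Leftarrow$ direction you should start from a \emph{cut-free} LJ derivation, since your justification of the side conditions of $\et^1_l$, $\ou^1_l$ and $\f^1_l$ (``$A,B$ are sub-formulas of a formula in ${\cal I}$'') relies on every formula in the derivation being a sub-formula of the end-sequent.
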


\begin{proof}
$\Rightarrow$ : We consider a normal ${\cal P}$-derivation ${\cal D}$ of $\G
\v_{\cal P} ; A$. The fact that $\G,A \subseteq {\cal I}$
allows to move up the rules using in ${\cal D}$ without putting
formulas in the bodies. Then the only rules used in ${\cal D}$ are
intuitionistic rules.

$\Leftarrow$ : The rules of LJ correspond to the rules $ax$, $cut_1$,
$c_l$, $w_l$, $0$, $\et^1_l$, $\et^1_r$, $\ou^1_l$, $\ou^1_r$,
$\ou^2_r$, $\f^1_l$ and $\f^1_r$ (without bodies).
\end{proof}

\begin{remark}
\begin{enumerate}
\item Let ${\cal P} = \{\perp\}$. The set ${\cal I}$ of all
formulas on ${\cal V} \cup \{0\}$ satisfies the hypothesis of
theorem \ref{LJ}.
\item Let $A$ be a formula on ${\cal V} \cup \{0\}$. We have
$\v_{\{\perp\}} ;A$ iff $\v_{\rm LJ} A$.
\end{enumerate}
\end{remark}

To code, at the same time, the two systems, it is necessary to realize
all the conditions of theorems \ref{LK} and \ref{LJ}. We give an
example of such a system.

\begin{definition}
\begin{enumerate}
\item We suppose that we have two disjoint sets of propositional
variables: ${\cal V}_i = \{X_i , Y_i , Z_i , ... \}$ the set of
\emph{intuitionistic variables} and ${\cal V}_c = \{X_c , Y_c , Z_c ,
... \}$ the set of \emph{classical variables}. Let ${\cal V} = {\cal
V}_i \cup {\cal V}_c$.

\item If $A$ is a formula, we denote by $var(A)$ the set of
variables and constants of $A$.

\item Let $\tilde{{\cal F}}$ be the set of all formulas,

$\tilde{{\cal K}} = \{F \in \tilde{{\cal F}}$ / $var(F) \subseteq
{\cal V}_c \cup \{\perp\}\}$ the set of \emph{classical formulas}, 

$\tilde{{\cal I}} = \{F \in \tilde{{\cal F}}$ / $var(F) \subseteq
{\cal V}_i \cup \{0\}\}$ the set of \emph{intuitionistic formulas} and

$\tilde{{\cal P}} = \tilde{{\cal F}} - \tilde{{\cal I}}$.
\end{enumerate}
\end{definition}

\begin{corollary} 
\begin{enumerate}
\item If $\G, \D \subseteq \tilde{{\cal K}}$, then $\G \v_{\tilde{{\cal P}}} \D;$ iff $\G \v_{\rm LK} \D$.
\item If $\G, A \subseteq \tilde{{\cal I}}$, then $\G \v_{\tilde{{\cal P}}} ;A$ iff $\G \v_{\rm LJ} A$.
\end{enumerate}
\end{corollary}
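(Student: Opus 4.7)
The plan is to verify that the sets $\tilde{\cal K}$ and $\tilde{\cal I}$ satisfy the hypotheses of Theorems \ref{LK} and \ref{LJ} respectively (with ${\cal P} := \tilde{\cal P}$), and then invoke those theorems directly. The corollary will then follow with no further work.

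For part (1), I would check the three hypotheses of Theorem \ref{LK}. \emph{Stability} of $\tilde{\cal K}$: if $A \, c \, B \in \tilde{\cal K}$ for $c \in \{\et,\ou,\f\}$, then $var(A) \cup var(B) = var(A c B) \subseteq {\cal V}_c \cup \{\perp\}$, hence $A, B \in \tilde{\cal K}$. \emph{Inclusion} $\tilde{\cal K} \subseteq \tilde{\cal P}$: since $\tilde{\cal P} = \tilde{\cal F} \setminus \tilde{\cal I}$, this is equivalent to $\tilde{\cal K} \cap \tilde{\cal I} = \emptyset$. If $F$ were in both, then $var(F) \subseteq ({\cal V}_c \cup \{\perp\}) \cap ({\cal V}_i \cup \{0\}) = \emptyset$ by disjointness of ${\cal V}_c, {\cal V}_i$ and $\perp \neq 0$; but a straightforward induction on the grammar shows $var(F) \neq \emptyset$ for every formula (the base cases $0, \perp, X$ all contribute a symbol, and the binary cases preserve non-emptiness). \emph{Exclusion} $0 \notin \tilde{\cal K}$: indeed $var(0) = \{0\} \not\subseteq {\cal V}_c \cup \{\perp\}$. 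Theorem \ref{LK} then yields the equivalence.

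For part (2), the argument is completely parallel. Stability of $\tilde{\cal I}$ is shown exactly as above. The condition $\tilde{\cal I} \cap \tilde{\cal P} = \emptyset$ is immediate from the definition $\tilde{\cal P} = \tilde{\cal F} \setminus \tilde{\cal I}$. Finally, $\perp \notin \tilde{\cal I}$ because $var(\perp) = \{\perp\} \not\subseteq {\cal V}_i \cup \{0\}$. Theorem \ref{LJ} then gives the desired equivalence.

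There is essentially no obstacle to this proof: it is pure bookkeeping, a direct instantiation of the two main theorems of this section. The only remark worth making is the auxiliary fact that $var(F)$ is non-empty for every formula, which is what forces the disjointness $\tilde{\cal K} \cap \tilde{\cal I} = \emptyset$ needed in part (1). That fact is itself a one-line induction on the grammar.
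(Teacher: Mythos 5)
Your proof is correct and follows the same route as the paper, which simply invokes Theorems \ref{LK} and \ref{LJ}; you merely spell out the routine verification of their hypotheses (stability, $\tilde{{\cal K}} \subseteq \tilde{{\cal P}}$ via non-emptiness of $var(F)$, and the exclusions of $0$ and $\perp$), which the paper leaves implicit.
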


\begin{proof}
We use theorems \ref{LK} and \ref{LJ}.
\end{proof}

\section{Coding of ML$_{\cal P}$ in LL} \label{LL}

\begin{definition}
\begin{enumerate}
\item We define the following two \emph{translations} $b$ and $t$ from {\rm
ML}$_{\cal P}$ to {\rm LL}:
\begin{itemize}
\item if $A \not \in {\cal P}$, then $b(A) = t(A)$
\item if $A \in {\cal P}$, then $b(A) = ? t(A)$
\end{itemize}
and
\begin{itemize}
\item  $t(0) = t(\perp) = 0$
\item  $t(X) = !X$, for every $X \in {\cal V}$
\item  $t(A \et B) = !b(A) \otimes !b(B)$
\item  $t(A \ou B) = !b(A) \oplus !b(B)$
\item  $t(A \f B) = !(t(A) \fl b(B))$
\end{itemize}
\item If $\G = A_1,...,A_n$, then $t(\G) = t(A_1),...,t(A_n)$.
\end{enumerate}
\end{definition}

\begin{lemma} \label{trad}
\begin{enumerate}
\item If $\G, t(A),t(A) \v_{\rm {LL}} \D$, then $\G, t(A) \v_{\rm
{LL}} \D$.
\item If $\G \v_{\rm {LL}}\D$, then $\G,t(A) \v_{\rm {LL}} \D$.
\item If $t(\G),A \v_{\rm {LL}} ?t(\D)$, then $t(\G),? A \v_{\rm {LL}} ?t(\D)$.
\item If $t(\G) \v_{\rm {LL}} ?t(\D), A$, then $t(\G) \v_{\rm {LL}}
?t(\D), !A$.
\end{enumerate}
\end{lemma}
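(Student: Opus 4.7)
The plan is to reduce the four statements to the standard structural and promotion rules of LL by first establishing an auxiliary fact: for every ${\cal P}$-formula $A$, the sequent $t(A) \v_{\rm LL} !t(A)$ is derivable. Its converse $!t(A) \v_{\rm LL} t(A)$ is immediate by $!$-left dereliction applied to $t(A) \v_{\rm LL} t(A)$. Once both are in hand, $t(A)$ becomes interchangeable with $!t(A)$ up to cut, and the lemma reduces to the standard behaviour of $!$-formulas in LL.

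I would prove $t(A) \v_{\rm LL} !t(A)$ by induction on $A$. If $A \in \{0, \perp\}$ then $t(A) = 0$ and the LL-rule for $0$ yields the sequent at once. If $A = X$, the formula $t(A) = !X$ is already a $!$-formula, so promotion applied to the axiom gives $!X \v_{\rm LL} !!X$. If $A = B \et C$, I form $!b(B), !b(C) \v_{\rm LL} !b(B) \otimes !b(C)$ from two axioms and $\otimes_r$; since the left context consists entirely of $!$-formulas, promotion yields $!b(B), !b(C) \v_{\rm LL} !(!b(B) \otimes !b(C))$, and $\otimes_l$ concludes. If $A = B \ou C$, the derivation is symmetric: use $\oplus_r$ followed by promotion to obtain both $!b(B) \v_{\rm LL} !(!b(B) \oplus !b(C))$ and $!b(C) \v_{\rm LL} !(!b(B) \oplus !b(C))$, and close by $\oplus_l$. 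If $A = B \f C$ then $t(A)$ is already of the form $!F$, so promotion applies directly.

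With the auxiliary fact in hand, each of the four statements reduces to a short composition of cuts surrounding one standard LL rule. For (1), two cuts against $!t(A) \v_{\rm LL} t(A)$ replace the two occurrences of $t(A)$ by $!t(A)$, standard $!$-contraction merges them, and a final cut against $t(A) \v_{\rm LL} !t(A)$ recovers $\G, t(A) \v_{\rm LL} \D$. Statement (2) is standard $!$-weakening followed by one cut against $t(A) \v_{\rm LL} !t(A)$. For (3), cuts against $!t(A_i) \v_{\rm LL} t(A_i)$ for each $A_i \in \G$ convert the left context $t(\G)$ into $!t(\G)$; the $?$-left promotion rule $!\Sigma, A \v_{\rm LL} ?\Pi \Rightarrow !\Sigma, ?A \v_{\rm LL} ?\Pi$ then applies because the left is all $!$ and the right all $?$; finally cuts against $t(A_i) \v_{\rm LL} !t(A_i)$ restore $t(\G)$. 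Statement (4) is identical in structure but uses the dual $!$-right promotion $!\Sigma \v_{\rm LL} ?\Pi, A \Rightarrow !\Sigma \v_{\rm LL} ?\Pi, !A$.

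The main obstacle is the auxiliary fact in the $\et$ and $\ou$ cases, where $t(A)$ is not itself a $!$-formula: one must perform the LL-promotion step before applying the outer left-introduction rule, so that the promotion is carried out in a context consisting purely of $!$-ed subformulas, as LL requires. Everything else amounts to routine cut bookkeeping with the standard structural and promotion rules of LL.
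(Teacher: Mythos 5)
Your proposal is correct, and it is worth noting that the paper itself gives no argument here: it simply writes ``See \cite{DJS}'' and defers to Danos--Joinet--Schellinx. What you have done is supply the self-contained proof the paper omits, and you have isolated exactly the right invariant: every translated formula satisfies $t(A) \v_{\rm LL} {!}t(A)$ (and trivially ${!}t(A) \v_{\rm LL} t(A)$ by dereliction), i.e.\ $t(A)$ is always ``$!$-like''. This is precisely the design principle behind the $t$/$b$ translation --- each clause of the definition wraps the top-level connective so that a promotion is available --- and your case analysis ($0$ by the $0$-left rule, $X$ and $A\f B$ because $t$ already produces a $!$-formula, $\et$ and $\ou$ by performing the promotion on the premises of $\otimes_l$/$\oplus_l$ where the context is purely banged) is the standard way to verify it; note that you never actually invoke the induction hypothesis, so this is really a one-level case distinction rather than an induction, which is harmless. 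The reduction of the four statements to contraction, weakening and the two promotion rules via cuts against $t(A) \dashv\vdash {!}t(A)$ is then routine and correct, including the subtle point in items 3 and 4 that $t(\G)$ must first be converted to ${!}t(\G)$ before promotion applies, since $t(B\et C)$ and $t(B\ou C)$ are not themselves $!$-formulas. In short: same underlying idea as the cited source, but you have made it explicit where the paper only points to the literature.
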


\begin{proof}
See \cite{DJS}.
\end{proof}

\begin{theorem} 
If $\G  \v_{\cal P} \D ; \Pi$, then $t(\G) \v_{\rm {LL}} ?t(\D),t(\Pi)$
\end{theorem}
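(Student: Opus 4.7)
The plan is to induct on the structure of the ${\cal P}$-derivation of $\G \v_{\cal P} \D;\Pi$. For each rule of {\rm ML}$_{\cal P}$, I translate the conclusions given by the induction hypothesis on the premises and assemble an {\rm LL}-derivation of $t(\G) \v_{\rm LL} ?t(\D), t(\Pi)$. The toolkit consists of the standard {\rm LL} inferences ($?d$, $?w$, $?c$, $!d$, promotion, $\otimes$, $\oplus$, $\fl$) together with Lemma \ref{trad}, whose four clauses let me pass freely between $A$ and $?A$, or between $A$ and $!A$, inside a translated context.

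Two bookkeeping conventions guide the case analysis. First, a formula in the antecedent $\G$ is translated by $t$ irrespective of its ${\cal P}$-status, whereas a body formula picks up an extra $?$; second, $b(A) = ?t(A)$ exactly when $A \in {\cal P}$ and $b(A) = t(A)$ otherwise. The rule $der$, which moves $A$ from the stoup to the body, translates to a right dereliction replacing $t(A)$ by $?t(A)$; the rule $cut_2$, which cuts on some $A \in {\cal P}$ seen as $?t(A)$ on the left premise and $t(A)$ on the right, is handled by applying Lemma \ref{trad}(3) to the right premise and then cutting on $?t(A)$. Contractions and weakenings are dispatched by $?c$, $?w$ and Lemma \ref{trad}(1--2); the rules $0$ and $\perp$ are immediate because $t(0) = t(\perp) = 0$.

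For the logical rules, the left rules carrying the side condition $A \not\in {\cal P}$ (namely $\et^1_l$, $\ou^1_l$, $\f^1_l$) are treated by a single $!d$ on each translated hypothesis followed by the matching {\rm LL} rule ($\otimes$-left, $\oplus$-left, $\fl$-left): the side condition precisely guarantees $b(A) = t(A)$, so no spurious $?$ appears inside the outer $!(\cdots)$ being dismantled. The remaining left rules $\et^2_l$, $\ou^2_l$, $\f^2_l$ and all the right rules $\et^i_r$, $\ou^i_r$, $\f^{1,2}_r$ mix stoup and body subformulas, but each one reduces to the uniform template: coerce each relevant subformula into $b(\cdot)$-form using Lemma \ref{trad}(3), turn it into $!b(\cdot)$ using $!d$ or Lemma \ref{trad}(4), and then apply the corresponding multiplicative or additive {\rm LL} rule.

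The main obstacle is the rule $\f^3_l$: the principal formula $A$ of the right premise lies in the body, so the induction hypothesis delivers $?t(A)$ on the right, whereas the translation $!(t(A) \fl b(B))$ of $A \f B$ carries a bare $t(A)$ under $\fl$. I bridge this mismatch by first deriving the auxiliary {\rm LL} sequent $?t(A), !(t(A) \fl b(B)) \v ?b(B)$ via $\fl$-left, $!d$, $?d$ on the right, and Lemma \ref{trad}(3); then cutting it against the right premise to eliminate $?t(A)$; and finally cutting the result against the left premise, after having used Lemma \ref{trad}(3) to turn $t(B)$ into $?t(B)$ when $B \in {\cal P}$ and the idempotence of $?$ to identify $?b(B)$ with $?t(B)$. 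Once this pattern is isolated, no other case demands comparable ingenuity.
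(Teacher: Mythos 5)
Your proposal is correct and follows essentially the same route as the paper: induction on the last rule, with Lemma \ref{trad} supplying the coercions between $t(A)$, $?t(A)$ and $!b(A)$, and with $\f^3_l$ singled out as the one delicate case where the bare $t(A)$ under the translated implication must be reconciled with the $?t(A)$ coming from the body of the right premise. Your handling of $\f^3_l$ via an auxiliary sequent and two cuts is only a cosmetic rearrangement of the paper's derivation, which plugs the left premise directly into the $\fl$-left inference and performs a single cut on $b(A)$.
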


\begin{proof}
By induction on a ${\cal P}$-derivation of $\G  \v_{\cal P} \D
; \Pi$. We look at the last rule used.
\begin{itemize}
\item For the rules $ax$, $cut_1$, $der$, $c_r$, $w_r$, $0$, $\perp$,
$\et^1_l$, $\ou^1_l$ and $\f^1_l$, the proof is easy.
\item For the rules $c_l$ and $w_l$, we use $1.$ and $2.$ of lemma \ref{trad}.
\item For the rules $cut_2$, $\et^2_l$, $\ou^2_l$ and $\f^2_l$, we use $3.$ of
lemma \ref{trad}.
\item For the rules $\et^i_r$ $(1 \leq i \leq 4)$, $\ou^i_l$ $(1 \leq
i \leq 4)$ and $\f^i_l$ $(1 \leq i \leq 2)$, we use $4.$ of
lemma \ref{trad}.
\item For the rule $\f^3_l$, we have, by induction hypothesis,
$t(\G),b(B) \v_{\rm {LL}} ?t(\D)$ and  $t(\G') \v_{\rm {LL}}p
?t(\D'),b(A),t(\Pi)$. We deduce that

$$\F{\F{\F{\F{\F{}{t(\G),b(B) \v_{\rm {LL}} ?t(\D)} \;\;\; \F{}{t(A)\v
t(A)}}{t(\G),t(A) \fl b(B),t(A) \v ?t(\D)}}{t(\G),!(t(A) \fl
b(B)),t(A)\v ?t(\D)}}{t(\G),!(t(A) \fl b(B)),b(A) \v ?t(\D)} \;\;\; \F{}{t(\G') \v_{\rm {LL}} ?t(\D'),b(A),t(\Pi)}}{t(\G),t(\G'),t(A \f B) \v
?t(\D), ?t(\D'), t(\Pi)}$$

\end{itemize}
\end{proof}

\end{document}